\documentclass[11pt, hidelinks]{article}
\usepackage[margin =1in]{geometry}
\usepackage{amssymb,amsthm,amsmath}
\usepackage{enumerate}
\usepackage{hyperref}
\usepackage{cite}
\usepackage[capitalize]{cleveref}
\usepackage[shortlabels]{enumitem}
\usepackage{color}
\usepackage{cancel}
 \usepackage{tikz}
\usepackage{tikz-3dplot}
\usepackage{pgf}
\usepackage{svg}
\usepackage{pgfplots}
\usepackage{import}
\usepackage{svg}
\usepackage{comment}
\usepackage[utf8]{inputenc} 
\usepackage[T1]{fontenc}    
\usepackage{hyperref}       
\usepackage{url}            
\usepackage{booktabs}       
\usepackage{amsfonts}       
\usepackage{nicefrac}       
\usepackage{microtype}
\usepackage{multirow}
\usepackage{xfrac}
\usepackage{todonotes}
\usepackage{titlesec}
\usepackage{caption}
\usepackage{subcaption}

\usepackage{algorithm}
\usepackage{algpseudocode}
\usepackage{array}

\algtext*{EndFor}
\algtext*{EndIf}
\algtext*{EndWhile}
\algtext*{EndProcedure}
\algrenewtext{For}[1]{\textbf{for} #1}
\algrenewtext{While}[1]{\textbf{while} #1}
\algrenewtext{If}[1]{\textbf{if} #1}

\algnewcommand{\Input}{\item[\textbf{Input:}]}
\algnewcommand{\Initialize}{\item[\textbf{Initialize:}]}

\usepackage{varwidth}

\titleformat{\subsubsection}[runin]
{\normalfont\normalsize\bfseries}{\thesubsubsection}{1em}{}

\usepackage{graphicx}

\usepackage{appendix}
\numberwithin{equation}{section}

\newcommand{\spann}[1]{\mathrm{span}\left\{{#1}\right\}}

\theoremstyle{thmstyletwo}%
\newtheorem{thm}{Theorem}[section]
\newtheorem{theorem}[thm]{Theorem}

\newtheorem{proposition}[thm]{Proposition}

\newtheorem{lemma}[thm]{Lemma}

\newtheorem{corollary}[thm]{Corollary}

{\theoremstyle{plain}
}

\numberwithin{equation}{section}
\crefname{claim}{claim}{claims}
\Crefname{claim}{Claim}{Claims}
\crefname{lem}{lemma}{lemmas}
\Crefname{lem}{Lemma}{Lemmas}
\crefname{algorithm}{algorithm}{algorithms}
\Crefname{algorithm}{Algorithm}{Algorithms}

\newcommand{\R}{\mathbb{R}}
\newcommand{\N}{\mathbb{N}}

\newcommand{\Ical}{\mathcal{I}}

\newcommand{\inner}[2]{\left\langle {#1}, {#2} \right\rangle}
\newcommand{\Wopt}{\mathcal{W}_\star^{M,D,N}}

\usepackage{mathtools}

\usepackage[T1]{fontenc}
\usepackage{lmodern}
\newcommand{\pPEP}[1]{p_{#1}}

\newcommand{\Algclass}[1]{\mathtt{A}_{\mathrm{{#1}}}}

\newcommand{\Pclass}{\mathtt{P}_{M,D}}

\newlength{\solutionindent}
\newlength{\questionboxrule}
\usepackage[dvipsnames]{xcolor}

\usepackage{hyperref}

\hypersetup{
  pdftitle={
    A Complete Characterization of Optimal 
    Subgradient Methods for Lipschitz Convex Minimization
  },
  pdfauthor={
    Aaron Zoll and Benjamin Grimmer
  },
  pdfsubject={
    A characterization of all fixed-step first-order methods attaining
    the minimax-optimal objective-gap guarantee for Lipschitz convex
    minimization
  },
  pdfkeywords={
    nonsmooth convex optimization,
    Lipschitz convex minimization,
    subgradient methods,
    fixed-step first-order methods,
    minimax optimality,
    optimal first-order methods,
    performance estimation problems,
    worst-case analysis,
    polyhedral characterization,
    first-order oracle complexity
  },
  pdfcreator={LaTeX with hyperref},
  pdfdisplaydoctitle=true,
  unicode=true
}
\begin{document}
    \title{A Complete Characterization of Optimal Subgradient Methods for Lipschitz Convex Minimization}

	 \author{Aaron Zoll\footnote{Johns Hopkins University, Department of Applied Mathematics and Statistics, \url{azoll1@jhu.edu}} \qquad Benjamin Grimmer\footnote{Johns Hopkins University, Department of Applied Mathematics and Statistics, \url{grimmer@jhu.edu}}}

	\date{}
	\maketitle

\begin{abstract}
    We consider the design of optimal fixed-step first-order methods for $M$-Lipschitz convex optimization given $\|x_0-x_\star\|\leq D$. Prior works have identified several distinct fixed-step methods, parameterized by a matrix of stepsizes $W$, with the (information-theoretic) minimax optimal rate $MD/\sqrt{N+1}$ of objective gap convergence. We provide a complete characterization of every optimal fixed-step method. Moreover, we show every optimal fixed-step method can be derived from the constructive approach of~\cite{constructive_approach} and provide a polyhedral representation of the set of optimal methods through proof multipliers. From this characterization, we show that no anytime optimal fixed-step subgradient methods exist.
\end{abstract}

\section{Introduction}

We consider the classical problem of Lipschitz (potentially nonsmooth) convex minimization~\cite{Shor:Subgradient}. In arbitrary (potentially large) dimension $d$, we study problems of the form $$\min_{x \in \R^d} f(x).$$ Given problem parameters $M, D > 0$, a problem instance is a pair $(f, x_0)$ with $f$  convex, $M$-Lipschitz, and attaining a minimizer $x_\star$ with Euclidean norm $\|x_0-x_\star\| \leq D$. We denote the set of all instances of any dimension $d$ by $\Pclass$. Herein, we consider iterative methods given access to a first-order oracle mapping, in the $i$th iteration, $x_i$ to a pair $(f_i,g_i)$ with function value $f_i = f(x_i)$ and subgradient $g_i \in \partial f(x_i)$, lying in the subdifferential $\partial f(x)=\{g : f(y) \geq f(x)+\langle g, y-x\rangle\ \forall y\}$.

Given an iteration budget $N$, we consider the class of $N$-step subgradient span algorithms for which $x_n \in x_0 - \spann{g_0, \dots, g_{n-1}}$, denoted by $\Algclass{span}$. Given an algorithm, we measure its performance by its worst-case final objective gap over all problem instances. Optimal algorithm design then seeks methods with the best worst-case performance.

This ``minimax optimal'' rate of convergence is known to be exactly $\frac{MD}{\sqrt{N+1}}$. Namely,
\begin{equation}\label{eq:minimax-opt}
    \min_{a \in \Algclass{span}}\max_{(f, x_0) \in \Pclass} f(x_N)-f(x_\star) = \frac{MD}{\sqrt{N+1}}.
\end{equation}
Equality is attained by a simple averaged subgradient method, iterating from $y_0=x_0$ with $y_n = y_{n-1}-\frac{D}{M\sqrt{N+1}}g_{n-1}$ for $g_{n-1}\in\partial f(y_{n-1})$, reporting final iterate as the average $x_N = \frac{1}{N+1}\sum_{i=0}^N y_i$. This method has $f_N-f_\star \leq \frac{MD}{\sqrt{N+1}}$ by~\cite[Section 3.1]{bubeck2015convex} and no method can do better by the lower bound of~\cite[Theorem A.1]{drori2016optimal}.

This algorithm takes the form of a fixed-step first-order method (FSFOM). These methods, parameterized by a lower triangular matrix $W$, are defined by the iteration
\begin{equation*}
    x_n = x_0 - \sum_{i=0}^{n-1} W_{n, i} g_i
\end{equation*}
for $n=1,\dots,N$, where $g_i\in\partial f(x_i)$ are subgradients computed at each iteration. For ease of presentation, the rows of $W$ are indexed from $1$ to $N$ while the columns are indexed from $0$ to $N-1$.  Denote the set of all $N\times N$ lower triangular matrices by $\mathbb{L}^{N}$ and those with positive lower triangular entries by $\mathbb{L}^{N}_{++}$.
For example, the above averaged subgradient method corresponds to the matrix with constant $W_{i,j} = \frac{D}{M\sqrt{N+1}}$ for $0 \leq j < i \leq N-1$ and $W_{N,j} = \frac{D}{M\sqrt{N+1}}\frac{N-j}{N+1}$. 

Surprisingly, many minimax optimal fixed-step methods exist. Zamani and Glineur~\cite{zamani2025exact} provided an optimal subgradient method $x_n = x_{n-1}-h_{n-1} g_{n-1}$ with nonconstant stepsizes (but no final averaging). Their method is parameterized by the matrix with $W_{i,j} = \frac{D}{M\sqrt{N+1}} \frac{N-j}{N+1}$.  G\"osgens and Van Parys~\cite{gosgens2025adversarial} identified infinitely many optimal subgradient step methods which interpolate between the averaged method and the nonconstant stepsize method above. Separately, the constructive design approach of Drori and Taylor~\cite{constructive_approach} produced an optimal ``momentum-type'' method. Their algorithm, derived from their Subspace-Search Elimination Procedure (SSEP), is parameterized by the matrix with $W_{i,j} =  \frac{D}{M\sqrt{N+1}} \frac{i-j}{i+1}$.

Note that beyond the restricted family of fixed-step methods, other minimax optimal subgradient methods are known. For example, see the Kelley cutting-plane Like Method of~\cite{drori2016optimal}. In fact, this method attains a strong dynamic notion of optimality called subgame perfection~\cite{grimmer2025subgameperfectmethodsnonsmooth}.

Here we characterize all minimax optimal fixed-step methods, denoted $\Wopt$. First, we prove optimality for a considered family of algorithms. These are precisely the methods able to be generated by the constructive approach of~\cite{constructive_approach}. Doing so provides a convex (polyhedral) representation of a family of optimal methods. Complementing this, we show no other optimal methods exist, establishing this family as a complete characterization of all optimal fixed-step methods.

\subsection{Related Work and Preliminaries}

\paragraph{Performance Estimation.} The foundational work~\cite{drori2014performance} introduced Performance Estimation Problems (PEPs) as a principled way to analyze first-order methods over traditional problem classes. Subsequently,~\cite {taylor2017smooth,taylor2017composite} established the exactness of this tool via interpolation theory. For some $W$, a PEP finds the worst final objective gap $f(x_N)-f(x_\star)$ over all possible problem instances. Formally,
  \begin{equation}\label{eq:Lipshitz-FSFOM-PEP-true}
     \pPEP{\star}(W) = \begin{cases}\displaystyle\max & f(x_N)-f(x_\star)  \\
    \text{s.t.} & x_n = x_{0} - \sum_{i=0}^{n-1} W_{n,i} g_i \ \ \qquad \forall n = 1, 2, \dots, N\\
    & (f,x_0)\in \Pclass,\ g_i\in\partial f(x_i)\quad \forall i=0,1,\dots,N.
    \end{cases}
  \end{equation}

The key observation to tractably solve PEPs is that the algorithm's trajectory and performance depend only on the first-order information at the iterates $x_0,\dots,x_N$ and at the minimizer $x_\star$. For ease in referring to such points, we consider the index set $ \Ical_N^\star = \{\star, 0, 1, \dots, N\}$. The relevant quantities to algorithmic performance are then
\begin{equation}\label{eq:observations}
    x_i, \qquad f_i = f(x_i), \quad g_i\in\partial f(x_i) \qquad \forall i \in \Ical_N^\star
\end{equation}
where we require $g_\star=0$ at the minimizer $x_\star$. Following the interpolation theory of~\cite[Theorem 3.5]{taylor2017composite}, we know that for observations $\{(x_i, f_i, g_i)\}_{i \in \Ical_N^\star}$, there exists an $M$-Lipschitz, convex function, minimized at $x_\star$ for which~\eqref{eq:observations} holds if and only if the following values are nonnegative for all $i,j\in \Ical_N^\star$ 
\begin{align*}
    \mathcal{C}_{i,j} := f_i - f_j - \inner{g_j}{x_i-x_j}, \qquad
    \mathcal{M}_{i} := M^2 - \|g_i\|^2.
\end{align*}
Combined with the distance bound $\mathcal{D} := D^2-\|x_0-x_\star\|^2 \geq 0$, Taylor, Hendrickx, and Glineur~\cite{taylor2017smooth,taylor2017composite} showed one can define an equivalent optimization problem to~\eqref{eq:Lipshitz-FSFOM-PEP-true} over finitely many variables.

The standard approach to simplifying such PEP reformulations is to consider a Gram change of variables. Define $P = [x_0-x_\star \ | \ g_0 \ |\  g_1 \ |\  \dots\ | \ g_N]$ and the Gram matrix of all inner products between these vectors as $G=P^\top P$. After substituting the equality definitions of $x_n = x_{0} - \sum_{i=0}^{n-1} W_{n,i} g_i$ and $g_\star=0$ into the remaining inequality constraints, observe that $\mathcal{D}$, $\mathcal{C}_{i,j}$, and $\mathcal{M}_i$ are linear in $F=[f_\star,f_0,\dots, f_N]^\top$ and $G$. We denote these functions by $\mathcal{D}(F,G)$, $\mathcal{C}_{i,j}(F,G)$ and $\mathcal{M}_i(F,G)$. Additionally, as a Gram matrix, we require $G$ to be positive semidefinite. Changing to these variables results in a semidefinite program, denoted by
\begin{equation}\label{eq:PEP_gram}
    \pPEP{\star}(W) = \pPEP{gram}(W) = \begin{cases}\displaystyle\max_{F,G} & f_N-f_\star  \\
    \mathrm{s.t.} & \mathcal{D}(F,G) \geq 0 \\
    & \mathcal{C}_{i,j}(F,G) \geq 0 \quad  \forall i, j \in \Ical_N^\star\\
    & \mathcal{M}_i(F,G) \geq 0 \quad \forall i = 0, \dots, N\\
    & G\succeq 0.
    \end{cases}
\end{equation}
Although this reformulation is obtained by a relaxation, tightness (the equality above) follows from noting that any positive semidefinite $G$ can be factored into $P^\top P$ and any feasible $F,G$ can be used to construct an interpolating problem instance in dimension at most $d=N+2$.

We denote the dual variables to the constraints as nonnegative $\sigma, \lambda_{i,j}, \mu_i$ and positive semidefinite $Z$ respectively. Such multipliers prove a guarantee $f_N-f_\star \leq r$ if the identity in $F$ and $G$ of
\begin{equation}\label{eq:certificate}
    \sigma\mathcal D+\sum_{i,j \in \Ical_N^\star}\lambda_{i,j}\mathcal C_{i,j}+\sum_{i=0}^N\mu_i\mathcal{M}_i+\inner{Z}{G} = r - (f_N-f_\star)
\end{equation}
holds (substituting $x_n = x_{0} - \sum_{i=0}^{n-1} W_{n,i} g_i$ and $g_\star=0$). Note that $\mathcal{C}_{i,i}$ is identically zero, so we can fix $\lambda_{i,i}$ as zero for each index $i$ throughout without loss.

For any feasible $(F,G)$, the nonnegativity of the left-hand-side of the identity~\eqref{eq:certificate} then ensures the right-hand-side is nonnegative, proving the rate. The dual problem then minimizes $r$ over all feasible certificates $\sigma,\lambda,\mu,Z$. For an optimal algorithm and proof, this rate is $r=\frac{MD}{\sqrt{N+1}}$.

\begin{lemma}\label{lem:strong-duality}
     Given any $M, D > 0$ and any matrix $W\in\mathbb{L}^N_{++}$ parameterizing a FSFOM, there exist nonnegative $\sigma, \lambda_{i,j}, \mu_i$ and positive semidefinite $Z$ such that~\eqref{eq:certificate} holds with $r=p_\star(W)$.
\end{lemma}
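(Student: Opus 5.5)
This is a strong duality statement for the SDP \eqref{eq:PEP_gram}. The plan is to verify a Slater-type condition for the primal, show the primal value is finite, and then invoke conic duality to get a dual optimal certificate with zero gap.

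\textbf{Finiteness of the primal.} Any feasible $(F,G)$ satisfies $\mathcal{C}_{\star,N}\ge 0$, i.e. $f_N-f_\star \le \inner{g_N}{x_N-x_\star}$. By Cauchy--Schwarz and the constraints $\mathcal{M}_i\ge0$, $\mathcal{D}\ge0$, this is at most
\[
M\Big(D+M\sum_{i}|W_{N,i}|\Big).
\]
Hence $p_\star(W)$ is finite. The primal is also feasible, for instance via $f\equiv 0$, which is $M$-Lipschitz.

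\textbf{Strict feasibility.} I would build a point with every scalar inequality strict and $G\succ 0$. The main obstacle is the constraints $\mathcal{C}_{i,j}$ for $i\ne j$: a nondegenerate Gram matrix forces distinct points $x_i$, and then we need strict interpolation inequalities.

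First try: take a small strictly convex, strongly convex function, say
\[
f(x)=\tfrac{\epsilon}{2}\|x-x_\star\|^2
\]
smoothed to be $M$-Lipschitz on a ball, with $x_0-x_\star$ small and generic in a high dimension ($d=N+2$). For a strongly convex function, $\mathcal{C}_{i,j}\ge \tfrac{\epsilon}{2}\|x_i-x_j\|^2>0$ whenever $x_i\ne x_j$. Small scales give $\|g_i\|<M$ and $\|x_0-x_\star\|<D$. The remaining worry is that $g_\star=0$ is forced, so the Gram matrix is built only from $x_0-x_\star, g_0,\dots,g_N$. These must be linearly independent to get $G\succ0$. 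With the quadratic, $g_i=\epsilon(x_i-x_\star)$, which lies in a span that can degenerate. To fix this, perturb $f$ by adding small generic linear-plus-quadratic pieces (a max of a few strongly convex quadratics), so that the gradients are generic and independent. Since $W$ has positive entries, the iterates are determined continuously from $x_0$ and the gradients. Genericity (an open dense condition) then yields linear independence and all $x_i$ distinct, including $x_i\ne x_\star$.

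Alternatively, the Slater point can be built directly in $(F,G)$ coordinates:
\begin{itemize}
\item take $G=\delta I$ with small $\delta>0$;
\item choose $f_\star=0$ and $f_i=c$ for $i\ge0$ with $c>0$ large relative to $\delta$, which makes $\mathcal{C}_{i,\star}=c>0$;
\item handle $\mathcal{C}_{i,j}$ for $i,j\ge0$, which then equals $-\inner{g_j}{x_i-x_j}$, a term of size $O(\delta)$ with no sign guarantee.
\end{itemize}
The last point is the real difficulty. I would instead use $f_i=c+\kappa\, q_i$ with $q_i$ chosen as values of a strictly convex function. This makes it cleaner to stick to the concrete function construction above, whose strict convexity yields all the strict inequalities at once.

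\textbf{Conclusion.} Given a Slater point and a finite primal value, standard conic (SDP) duality gives dual attainment with zero gap. That means there exist nonnegative $\sigma,\lambda_{i,j},\mu_i$ and $Z\succeq0$ whose Lagrangian identity is exactly \eqref{eq:certificate} with $r=\pPEP{gram}(W)=p_\star(W)$. Matching coefficients in $F$ and $G$ is the standard Lagrangian computation. Setting $\lambda_{i,i}=0$ is harmless since $\mathcal{C}_{i,i}\equiv0$.
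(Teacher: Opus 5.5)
Your overall architecture (finite primal value, a Slater point, conic duality with attainment) matches the paper's. Your bound on $f_N-f_\star$ via $\mathcal{C}_{\star,N}\ge0$ and Cauchy--Schwarz correctly supplies the finiteness that the paper takes for granted.

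\textbf{The gap.} It is in the one step with real content: exhibiting a feasible $(F,G)$ with $G\succ0$. You observe that $\tfrac{\epsilon}{2}\|x-x_\star\|^2$ gives a rank-one $G$, then assert that a generic perturbation (``a max of a few strongly convex quadratics'') makes $x_0-x_\star,g_0,\dots,g_N$ independent because this is ``an open dense condition.'' Openness is easy. Density, indeed nonemptiness, of the good set is exactly what needs proof, because the $g_i$ are not free parameters. They are coupled to $f$ through $x_n=x_0-\sum_{i<n}W_{n,i}g_i$, the same coupling that collapsed the isotropic quadratic to rank one. For a max of quadratics the data depend on the parameters only piecewise, so no algebraic genericity principle applies off the shelf, and you never exhibit a single instance with independence.

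Two smaller points. Positivity of $W$ is not what makes the iterates depend continuously on the data; they always do, and what actually matters is $W_{n,n-1}\neq0$, as shown below. Also, demanding strictness of every $\mathcal{C}_{i,j}$ is unnecessary. These constraints are affine in $(F,G)$, and the refined Slater condition the paper invokes (Rockafellar, Theorem 28.2) only needs strictness of the non-affine constraint $G\succeq0$.

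\textbf{How to repair your route.} Use a non-isotropic quadratic instead of a max. On $\R^{N+2}$ take $f(x)=\tfrac12(x-x_\star)^\top A(x-x_\star)$, with $A$ diagonal with distinct positive entries and $v=x_0-x_\star$ having all entries nonzero.
\begin{itemize}
\item Then $g_n=q_n(A)v$ with $q_0(t)=t$ and $q_n(t)=t-\sum_{i<n}W_{n,i}\,t\,q_i(t)$.
\item So $q_n$ has degree $n+1$ and leading coefficient $(-1)^n\prod_{k=1}^{n}W_{k,k-1}\neq0$.
\item Hence $\mathrm{span}\{v,g_0,\dots,g_N\}=\mathrm{span}\{v,Av,\dots,A^{N+1}v\}=\R^{N+2}$ by a Vandermonde argument.
\end{itemize}
This gives $G\succ0$ and linearly independent (so distinct and nonzero) $x_i-x_\star$. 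Every $\mathcal{C}_{i,j}=\tfrac12(x_i-x_j)^\top A(x_i-x_j)$ is then strictly positive, and $\mathcal{D},\mathcal{M}_i>0$ after shrinking $v$. That is a genuinely interior point, so the textbook conic Slater theorem applies.

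\textbf{Comparison with the paper.} The paper takes the route you abandoned. It uses orthogonal $g_n=\tfrac{M}{2}e_{n+1}$ and equal $f_n$. It tilts $x_0-x_\star$ to have the same positive inner product with each $g_n$, plus an $e_{N+2}$ component for independence.

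Your worry about that route was misplaced. With orthogonal gradients, $\mathcal{C}_{i,j}$ is a positive multiple of $W_{i,j}$ for $i>j$ and is $0$ for $i<j$. The actual defect of your $G=\delta I$ attempt is $\mathcal{C}_{\star,i}=-c<0$, since $\inner{g_i}{x_0-x_\star}=0$ there.

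The paper's point leaves $\mathcal{C}_{i,j}$ for $i<j$ and $\mathcal{C}_{\star,i}$ tight, which the refined Slater condition tolerates. It buys a fully explicit, instantly checkable Slater point. The repaired quadratic buys full strict feasibility, at the cost of the Krylov argument.
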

\begin{proof}
    We prove this by establishing strong duality with dual attainment via a Slater point (strictly satisfying all non-affine constraints). Fix any orthonormal vectors $e_1,\dots,e_{N+2}$. Set $f_n=0$, $g_n=\frac{M}{2}e_{n+1}$, $x_n = x_0 - \sum_{i=0}^{n-1}W_{n,i}g_i$ for $n=0,\dots,N$, with $x_0=0$, $x_\star=-\frac{D}{2\sqrt{N+2}}\sum_{i=1}^{N+2}e_i$, $f_\star = -\frac{MD}{4\sqrt{N+2}}$, and $g_\star = 0$. Let $(F,G)$ be their Gram representation. By construction, $\|x_0-x_\star\|<D$ and $\|g_i\| <M$, so $\mathcal{D}(F,G), \mathcal{M}_i(F,G) > 0$. Note that $$\mathcal{C}_{i,j}(F,G) = f_i-f_j-\inner{g_j}{x_i-x_j}=\begin{cases}
        \frac{M^2}{4}W_{i,j} & i > j\\
        0 & \text{else,}
    \end{cases} \quad \mathcal{C}_{i, \star}(F,G) = \frac{MD}{4\sqrt{N+2}}, \quad \mathcal{C}_{\star, i}(F,G) = 0$$ 
    where we recall $f_i=f_j$, the method fixes $x_n=x_0-\sum_{i=0}^{n-1}W_{n,i}g_i$, and the gradients are orthogonal. Since $x_0-x_\star$ lies outside $\spann{e_1,\dots,e_{N+1}}$ and the gradients are orthogonal, $x_0-x_\star,g_0,\dots,g_N$ are linearly independent and $G\succ0$. Since the constraints $\mathcal{D}, \ \mathcal{C}_{i,j}, \ \mathcal{M}_i$ are all affine in $(F,G)$, and $G\succ0$, the feasible $(F,G)$ is a Slater point for~\eqref{eq:PEP_gram}. Since $p_\star(W)$ is finite, \cite[Theorem 28.2]{rockafellar} applies and strong duality holds with dual attainment. Therefore, there exist nonnegative $\sigma, \lambda_{i,j}, \mu_i$ and positive semidefinite $Z$ such that~\eqref{eq:certificate} holds with $r=p_\star(W)$.
\end{proof}

\paragraph{Converting From Stepsize Matrices to Proof Multipliers}
Recently, a complete characterization of all optimal nonexpansive fixed-point methods was given by~\cite{yoon2025}. A key step was identifying a relation between a matrix of stepsizes (there denoted $H$) and the associated proof multipliers $\lambda$. From this, they characterized every optimal fixed-point method based on associated polynomial equalities and inequalities, called $H$-invariances and $H$-certificates.

Here, we follow the same tactic. In particular, we will show that each optimal method has a unique dual optimal PEP proof, all using the same common values of
\begin{equation}\label{eq:common-values}
    \sigma = \frac{M}{2D\sqrt{N+1}}, \qquad \lambda_{\star,i} = \frac{1}{N+1}, \qquad \mu_{i} = \frac{D}{2M(N+1)^{3/2}}, \qquad Z=\sigma ww^\top
\end{equation}
with $w = [1, -\frac{D}{M\sqrt{N+1}}, \dots, -\frac{D}{M\sqrt{N+1}}]^\top$ and setting each $\lambda_{i,\star}$ and $\lambda_{i,j}$ with $i> j$ to zero. So optimal methods only differ in their optimal proof structure in the remaining $\lambda_{i,j}$ with $0 \leq i < j \leq N$. We find a bijection between these upper triangular proof multipliers $\lambda$ and the lower triangular stepsizes $W$. Define $W(\lambda)$ recursively by
\begin{equation}\label{eq:dual-generated-FSFOM}
W_{n,j}=\frac{1}{\Lambda_n}\left(\frac{D}{M(N+1)^{3/2}}
+\sum_{i=j+1}^{n-1}\lambda_{i,n}W_{i,j}\right),\qquad 0\leq j<n\leq N,
\end{equation}
where $\Lambda_n = \frac{1}{N+1}+\sum_{i=0}^{n-1}\lambda_{i,n}$, and define $\lambda(W)$ recursively by 
\begin{align}
\lambda_{j+1,n}&=\frac{1}{W_{j+1,j}}\left(\frac{DW_{n,j}}{M(N+1)^{3/2}W_{n,n-1}} -\frac{D}{M(N+1)^{3/2}}
-\sum_{i=j+2}^{n-1}\lambda_{i,n}W_{i,j}\right),\qquad j=n-2,\dots,0, \nonumber\\
\lambda_{0,n}&=\frac{D}{M(N+1)^{3/2}W_{n,n-1}}-\frac{1}{N+1}-\sum_{i=1}^{n-1}\lambda_{i,n}.\label{eq:lambda(W)}
\end{align}

These mappings are exactly each other's inverse on the domains given below. 
\begin{proposition}\label{prop:bijection}
Given $\lambda_{i,j}$ for $0 \leq i < j \leq N$ with $\Lambda_n=\frac{1}{N+1}+\sum_{i=0}^{n-1}\lambda_{i,n} > 0$,
and define the lower triangular $W=W(\lambda)\in \mathbb{L}^N$ by~\eqref{eq:dual-generated-FSFOM}.
Conversely, for lower triangular $W$ with $W_{n,n-1}>0$ define $\lambda=\lambda(W)$
by~\eqref{eq:lambda(W)}.
Then $W(\cdot)$ and $\lambda(\cdot)$ are inverses, providing bijections between
$$\left\{\lambda \mid \Lambda_n>0,\ n=1,\dots,N\right\}\quad\text{and}\quad
\left\{W \in \mathbb{L}^N \mid W_{n,n-1}>0,\ n=1,\dots,N\right\}.$$
\end{proposition}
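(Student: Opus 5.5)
The plan is to set $c := \frac{D}{M(N+1)^{3/2}}$ and observe that both maps are equivalent rearrangements of one family of linear identities. For each $n$ these are
\begin{equation*}
\Lambda_n W_{n,j} \;=\; c + \sum_{i=j+1}^{n-1}\lambda_{i,n}W_{i,j}, \qquad 0\le j<n, \qquad\text{together with}\qquad \Lambda_n=\tfrac{1}{N+1}+\sum_{i=0}^{n-1}\lambda_{i,n}. \tag{$\ast_n$}
\end{equation*}
Row $n$ of $W$ is coupled only to column $n$ of $\lambda$, namely $\lambda_{0,n},\dots,\lambda_{n-1,n}$, and to the earlier rows $1,\dots,n-1$ of $W$. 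So I will argue by induction on $n$. The inductive claim is that, once rows $1,\dots,n-1$ of $W$ are fixed with positive subdiagonal entries, the relations $(\ast_n)$ give a bijection between column $n$ of $\lambda$ with $\Lambda_n>0$ and row $n$ of $W$ with $W_{n,n-1}>0$.

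\textbf{Step 1: the maps land in the stated sets.} Take $j=n-1$ in \eqref{eq:dual-generated-FSFOM}. The sum is empty, so $W_{n,n-1}=c/\Lambda_n>0$ whenever $\Lambda_n>0$; hence $W(\lambda)$ lies in the target set. Conversely, from \eqref{eq:lambda(W)} the formula for $\lambda_{0,n}$ rearranges to $\Lambda_n = \frac{1}{N+1}+\sum_{i=0}^{n-1}\lambda_{i,n} = c/W_{n,n-1}>0$. So $\lambda(W)$ lies in the domain set. The recursion for $\lambda_{j+1,n}$ divides by $W_{j+1,j}$, which is positive by hypothesis, so $\lambda(W)$ is well defined.

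\textbf{Step 2: $\lambda(W)$ is equivalent to $(\ast_n)$.} Suppose $\Lambda_n = c/W_{n,n-1}$. Then the first term in \eqref{eq:lambda(W)} is exactly $\Lambda_n W_{n,j}$. Isolate the $i=j+1$ term $\lambda_{j+1,n}W_{j+1,j}$ in $(\ast_n)$ at index $j$. Solving for it reproduces the recursion for $\lambda_{j+1,n}$, run for $j=n-2,\dots,0$. The $j=n-1$ instance of $(\ast_n)$ is precisely $\Lambda_n=c/W_{n,n-1}$, and the definition of $\Lambda_n$ then yields $\lambda_{0,n}$.

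So, given row $n$ of $W$ and the earlier rows, the unknowns $(\Lambda_n,\lambda_{n-1,n},\dots,\lambda_{1,n},\lambda_{0,n})$ are determined by a triangular system. Its pivots are $W_{n,n-1}$ (through $\Lambda_n$) and $W_{j+1,j}$, all nonzero. Hence the system has the unique solution $\lambda(W)$. Symmetrically, given column $n$ of $\lambda$, $(\ast_n)$ determines row $n$ of $W$ uniquely as $W(\lambda)$, because $\Lambda_n\neq0$.

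\textbf{Step 3: conclude the inverse relations.} Both $W(\lambda)$ and $\lambda(W)$ are the unique solutions of the same system $(\ast_n)$, $n=1,\dots,N$. Therefore $\lambda(W(\lambda))=\lambda$ and $W(\lambda(W))=W$. The induction on $n$ is needed because the earlier rows $W_{i,j}$ with $i<n$ must already agree on both sides; this holds by the inductive hypothesis.

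The only delicate point is the bookkeeping in Step 2, namely that the sum in \eqref{eq:lambda(W)} runs over $i\ge j+2$. This makes the back-substitution order $j=n-2,\dots,0$ triangular, with each $\lambda_{i,n}$ for $i>j+1$ already computed. I expect nothing deeper to be required.
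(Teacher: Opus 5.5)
Your proposal is correct and follows essentially the paper's argument: both rest on the $j=n-1$ case giving $\Lambda_n = c/W_{n,n-1}$ (with $c=\frac{D}{M(N+1)^{3/2}}$), on the rearranged identity $\Lambda_n W_{n,j}-c=\sum_{i=j+1}^{n-1}\lambda_{i,n}W_{i,j}$ back-substituted over $j=n-2,\dots,0$, and on an induction over $n$ to recover the rows of $W$. The only difference is packaging. You phrase both compositions as uniqueness of the solution to the common triangular system $(\ast_n)$, whereas the paper substitutes one map into the other and checks each entry directly.
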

\begin{proof}
We first verify the mappings are well-defined on the stated domains. At $j=n-1$ the sum in~\eqref{eq:dual-generated-FSFOM} is empty, so $W_{n,n-1}=\frac{D}{M(N+1)^{3/2}\Lambda_n}$, and hence $W_{n,n-1}>0$ if and only if $\Lambda_n>0$.

We now show these maps are inverses directly. Suppose $\Lambda_n>0$ for all $n$ and set $W=W(\lambda)$, which by the mapping in~\eqref{eq:dual-generated-FSFOM} has positive subdiagonal. Fixing $n$, the case $j=n-1$ of~\eqref{eq:dual-generated-FSFOM} gives $\Lambda_n=\frac{D}{M(N+1)^{3/2}W_{n,n-1}}$, the quantity appearing throughout~\eqref{eq:lambda(W)}. Inducting on $j$ from $n-2$ to $0$, suppose $\lambda(W)_{i,n}=\lambda_{i,n}$ for $i=j+2,\dots,n-1$. Then
$$\lambda(W)_{j+1,n}=\frac{1}{W_{j+1,j}}\left(\frac{DW_{n,j}}{M(N+1)^{3/2}W_{n,n-1}}-\frac{D}{M(N+1)^{3/2}}-\sum_{i=j+2}^{n-1}\lambda_{i,n}W_{i,j}\right)=\lambda_{j+1,n},$$
by $\Lambda_n W_{n,j}-\frac{D}{M(N+1)^{3/2}}=\sum_{i=j+1}^{n-1}\lambda_{i,n}W_{i,j}$. Finally, $\lambda(W)_{0,n}=\Lambda_n-\frac{1}{N+1}-\sum_{i=1}^{n-1}\lambda_{i,n}=\lambda_{0,n}$.

Now suppose $W$ has positive subdiagonal and set $\lambda=\lambda(W)$. Summing the definition of $\lambda_{0,n}$ in~\eqref{eq:lambda(W)} with $\frac{1}{N+1}+\sum_{i=1}^{n-1}\lambda_{i,n}$ gives $\Lambda_n=\frac{D}{M(N+1)^{3/2}W_{n,n-1}}$. We show $W(\lambda)=W$ by induction on $n$. For $n=1$, $W(\lambda)_{1,0}=\frac{D}{M(N+1)^{3/2}\Lambda_1}=W_{1,0}$. Assume $W(\lambda)_{i,j}=W_{i,j}$ for all $i<n$ and consider $j<n$. If $j=n-1$, then $W(\lambda)_{n,n-1}=\frac{D}{M(N+1)^{3/2}\Lambda_n}=W_{n,n-1}$. Otherwise,
\begin{align*}
W(\lambda)_{n,j}&=\frac{1}{\Lambda_n}\left(\frac{D}{M(N+1)^{3/2}}+\sum_{i=j+1}^{n-1}\lambda_{i,n}W_{i,j}\right)\\
&=\frac{1}{\Lambda_n}\left(\frac{D}{M(N+1)^{3/2}}+\sum_{i=j+2}^{n-1}\lambda_{i,n}W_{i,j}+\lambda_{j+1,n}W_{j+1,j}\right)=W_{n,j},
\end{align*}
where the last equality considers the definition of $\lambda_{j+1,n}$ in~\eqref{eq:lambda(W)}.
\end{proof}

\subsection{Main Theorem}

Below we characterize the entire set of minimax optimal methods $W$ as exactly those whose associated upper triangular $\lambda$ block lies in the polyhedron
\begin{equation}\label{eq:opt-lambda-set}
       \mathcal{P}^N = \left\{\lambda \mid \lambda\geq 0,\ \sum_{i=j+1}^N\lambda_{j,i} - \sum_{i=0}^{j-1}\lambda_{i,j}=\frac{1}{N+1},\ j=0,\dots,N-1\right\}.
   \end{equation}
We prove this characterization in two steps. Section~\ref{sec:sufficiency} shows that each $\lambda\in \mathcal{P}^N$ (combined with the common values~\eqref{eq:common-values}) provides a certificate of optimality for $W(\lambda)$. These proofs are exactly of the form generated by the dual in the constructive approach of~\cite{constructive_approach}. Section~\ref{sec:necessity} shows that for any optimal method $W$, any associated optimal proof multipliers (from Lemma~\ref{lem:strong-duality}) must take the common values~\eqref{eq:common-values} and set the remaining block of $\lambda$ on $i,j=0,\dots,N$ to be the upper triangular $\lambda(W)$. As a by-product, their optimal proof is unique (up to vacuous freedom in $\lambda_{i,i}$). This is proven by a novel use of complementary slackness between minimax optimal methods and maximin hard instances.

\begin{theorem}\label{thm:characterization}
   For any $M, D > 0$ and $N \in \N$, the set of optimal methods is $\Wopt= W(\mathcal{P}^N)$.
\end{theorem}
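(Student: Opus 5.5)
We prove the two inclusions $W(\mathcal{P}^N)\subseteq\Wopt$ and $\Wopt\subseteq W(\mathcal{P}^N)$ separately, following the outline in the paragraph before Theorem~\ref{thm:characterization}.

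\textbf{Sufficiency.} Fix $\lambda\in\mathcal{P}^N$. We first check that $\Lambda_n>0$, so that $W=W(\lambda)$ is well defined by Proposition~\ref{prop:bijection}; this is immediate since $\lambda\ge0$ gives $\Lambda_n\geq\frac{1}{N+1}$. Recursion~\eqref{eq:dual-generated-FSFOM} then shows inductively that every entry of $W$ is positive, so $W\in\mathbb{L}^N_{++}$.

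Next we assemble the certificate~\eqref{eq:certificate}. We take the common values~\eqref{eq:common-values}, this $\lambda$ on the upper triangle, all other multipliers zero, and $r=\frac{MD}{\sqrt{N+1}}$. We then expand both sides as linear functions of $F$ and $G$ and match coefficients.

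In $F$: the coefficient of $f_j$ for $j\le N-1$ is $\lambda_{\star,j}+\sum_{i>j}\lambda_{j,i}-\sum_{i<j}\lambda_{i,j}$ (with signs arranged appropriately). This must vanish, and that is exactly the flow constraint defining $\mathcal{P}^N$. The coefficient of $f_N$ gives $\frac{1}{N+1}+\sum_i\lambda_{i,N}=1$, which follows by summing all flow constraints. The coefficient of $f_\star$ is automatic.

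In $G$: the constant term $\sigma D^2+\sum_i\mu_iM^2$ must equal $r$, which is a direct arithmetic check. The remaining quadratic form $-\sigma\|x_0-x_\star\|^2-\sum\lambda_{i,j}\langle g_j,x_i-x_j\rangle-\sum\mu_i\|g_i\|^2$ must equal $-\sigma\langle ww^\top,G\rangle$. Matching the $\langle x_0-x_\star,g_j\rangle$ coefficients uses $\lambda_{\star,j}=\frac{1}{N+1}$ together with the flow constraints. Matching the $\langle g_j,g_n\rangle$ entries for $j<n$ is precisely what recursion~\eqref{eq:dual-generated-FSFOM} encodes: the $\Lambda_n W_{n,j}$ term collects contributions from the $\lambda_{i,n}\mathcal{C}_{i,n}$ terms, and the constant $\frac{D}{M(N+1)^{3/2}}$ comes from $\sigma w_jw_n$. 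The diagonal entries $\|g_j\|^2$ fix $\mu_j$.

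We expect this bookkeeping to be routine but index-heavy. Once it is done, $p_\star(W)\le\frac{MD}{\sqrt{N+1}}$, and the lower bound~\eqref{eq:minimax-opt} gives equality, so $W\in\Wopt$.

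\textbf{Necessity (main obstacle).} Let $W$ be optimal. Optimal methods must have positive entries; if they did not, a hard instance would show the rate fails, and this point should be checked. Granting it, Lemma~\ref{lem:strong-duality} provides multipliers with $r=\frac{MD}{\sqrt{N+1}}$.

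The key idea is complementary slackness against a family of hard instances that are worst case for \emph{every} span method. Take the instances underlying the lower bound of~\cite{drori2016optimal}, $f(x)=\max\{\max_i M\langle e_i,x\rangle, -\frac{MD}{\sqrt{N+1}}\ldots\}$-type functions, with orthonormal directions chosen adaptively or rotated. Each such instance yields $f_N-f_\star=\frac{MD}{\sqrt{N+1}}$ for $W$, so it is primal optimal for~\eqref{eq:PEP_gram}. Consequently every active-slack multiplier must vanish, and $\langle Z,G\rangle=0$.

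By varying over enough such instances we want to force the following:
\begin{itemize}
\item $\lambda_{i,j}=0$ for $i>j$ and $\lambda_{i,\star}=0$, by using instances where those $\mathcal{C}$ are strictly positive;
\item $\mathcal{D}$ and $\mathcal{M}_i$ are tight, which leaves $\sigma,\mu_i$ free but pinned by coefficient matching;
\item $Z$ is rank one along $w$, because the hard instances' Gram matrices span the orthogonal complement of $w$.
\end{itemize}
Coefficient matching in $F$ and in the $\langle x_0-x_\star,g_j\rangle$ terms then fixes $\lambda_{\star,i}=\frac{1}{N+1}$ and $\sigma$, and in turn $\mu_i$, recovering~\eqref{eq:common-values}. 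Matching $f_j$ coefficients gives the flow equalities of $\mathcal{P}^N$, and the $\langle g_j,g_n\rangle$ entries give exactly~\eqref{eq:dual-generated-FSFOM}. Hence $W=W(\lambda)$ with $\lambda\in\mathcal{P}^N$, and by Proposition~\ref{prop:bijection} $\lambda=\lambda(W)$, which also gives uniqueness.

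The hardest step is constructing a sufficiently rich family of hard instances for an arbitrary optimal $W$: enough to certify that each unwanted multiplier vanishes and that $Z$ has the claimed range. My first attempt would be to perturb the signs and the ordering of the orthogonal subgradient directions, and the function values at $x_\star$, within the Drori--Teboulle worst-case construction.
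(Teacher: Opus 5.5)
Your sufficiency half follows the paper's argument: the same multipliers \eqref{eq:common-values} and the same coefficient matching. Your check of the $f_N$ coefficient by summing the flow equalities is correct. One small slip: the off-diagonal contribution of $Z$ is $2\sigma w_jw_n=\frac{D}{M(N+1)^{3/2}}$, not $\sigma w_jw_n$.

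Your necessity half has the paper's architecture, but it is not yet a proof. The two facts that carry it are left as goals: that $W\in\mathbb{L}^N_{++}$, and that every $\lambda_{i,j}$ with $i>j$ and every $\lambda_{i,\star}$ vanish while $Z\propto ww^\top$. The route you plan for the second fact also would not work. Flipping signs or reordering the orthonormal directions is an orthogonal change of coordinates, so it reproduces exactly the same Gram data $(F^W,G^W)$ and gives no new complementary-slackness equations. Moving the floor value $-\frac{MD}{\sqrt{N+1}}$ destroys either tightness or the constraint $\|x_0-x_\star\|\le D$.

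The missing observation is that no family is needed. Once positivity is known, the single instance \eqref{eq:hard-instance} with $k=N+1$ forces everything.

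\textbf{The single instance.} For $W\in\mathbb{L}^N_{++}$, induction gives $x_n=-M\sum_{i<n}W_{n,i}e_{i+1}$, $f_n=0$ and $g_n=Me_{n+1}$ for every $n$. So the instance is tight, and evaluating \eqref{eq:certificate} at $(F^W,G^W)$ forces every multiplier--slack product, and $\langle Z,G^W\rangle$, to vanish.
\begin{itemize}
\item The slacks are $\mathcal{C}^W_{i,j}=M^2W_{i,j}>0$ for all $i>j$ simultaneously and $\mathcal{C}^W_{i,\star}=\frac{MD}{\sqrt{N+1}}>0$ for all $i$. This kills all of those multipliers at once.
\item The $g^W_n$ are orthogonal, and $x^W_0-x^W_\star=\frac{D}{\sqrt{N+1}}[1,\dots,1]^\top$ lies in their span with $P^Ww=0$, so $\ker G^W=\spann{w}$.
\item From $Z,G^W\succeq0$ and $\langle Z,G^W\rangle=0$ you get $ZG^W=0$, hence $Z=\beta ww^\top$. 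The $\|x_0-x_\star\|^2$ coefficient then gives $\beta=\sigma$.
\end{itemize}

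\textbf{Positivity.} Positivity is what makes this trajectory and these strict slacks valid for every optimal $W$. It comes from the same construction with $k=N$, where $f_\star=-\frac{MD}{\sqrt N}$.
\begin{itemize}
\item Let $\hat n$ be the first row with some $W_{\hat n,j}\le0$. The oracle returns $g_n=Me_{n+1}$ for $n<\hat n$.
\item But $x_{\hat n}$ has a nonnegative coordinate among its first $\hat n$. So $g_{\hat n}$ repeats an already-seen direction, or, if $\hat n=N$, $x_N$ itself has that nonnegative coordinate.
\item Either way some coordinate of $x_N$ is nonnegative, so $f(x_N)-f_\star\ge\frac{MD}{\sqrt N}>\frac{MD}{\sqrt{N+1}}$, contradicting optimality.
\end{itemize}

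With these two facts in hand, your coefficient matching finishes the argument exactly as you describe.
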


\begin{corollary}\label{cor:uniqueness}
    For $W\in \Wopt$, there exist unique dual optimal multipliers $\sigma,\lambda,\mu,Z$, given by~\eqref{eq:common-values} with the remaining $\lambda$ block on $i,j=0,\dots,N$ as the upper triangular matrix $\lambda(W)$.
\end{corollary}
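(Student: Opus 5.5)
The statement to prove is Corollary~\ref{cor:uniqueness}. I would derive it from the necessity half of Theorem~\ref{thm:characterization} (Section~\ref{sec:necessity}), together with Lemma~\ref{lem:strong-duality} and Proposition~\ref{prop:bijection}. There are two things to show: existence and uniqueness.

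\textbf{Existence.} Fix $W\in\Wopt$. Since $\Wopt\subseteq\mathbb{L}^N_{++}$, Lemma~\ref{lem:strong-duality} gives nonnegative $\sigma,\lambda,\mu$ and $Z\succeq 0$ satisfying~\eqref{eq:certificate} with $r=p_\star(W)=\frac{MD}{\sqrt{N+1}}$. These multipliers are therefore dual optimal. An alternative route is also available: by Theorem~\ref{thm:characterization}, $W=W(\lambda')$ for some $\lambda'\in\mathcal{P}^N$. Proposition~\ref{prop:bijection} then gives $\lambda'=\lambda(W)$, and the sufficiency argument of Section~\ref{sec:sufficiency} shows that $\lambda(W)$, combined with~\eqref{eq:common-values}, is itself an optimal certificate.

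\textbf{Uniqueness.} Let $(\sigma,\lambda,\mu,Z)$ be any dual optimal certificate for $W$. The plan is to run the necessity argument of Section~\ref{sec:necessity} on it. That argument uses complementary slackness against maximin hard instances, i.e.\ primal optimal $(F,G)$ achieving $f_N-f_\star=\frac{MD}{\sqrt{N+1}}$, and I expect it to force the following:
\begin{itemize}
\item $\sigma$, $\lambda_{\star,i}$, $\mu_i$ and $Z$ take the common values~\eqref{eq:common-values};
\item $\lambda_{i,\star}=0$ and $\lambda_{i,j}=0$ for $i>j$.
\end{itemize}
To finish, I would match coefficients in the identity~\eqref{eq:certificate}. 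Specifically, I would collect the coefficients of the cross terms $\inner{g_j}{g_n}$ and $\inner{g_j}{x_0-x_\star}$ with the $F$-coefficients already fixed. These linear equations are exactly the recursion~\eqref{eq:dual-generated-FSFOM}, which states $W=W(\lambda)$ for the upper triangular block. Each $\Lambda_n$ is forced positive, since $\Lambda_n W_{n,n-1}=\frac{D}{M(N+1)^{3/2}}>0$. Hence Proposition~\ref{prop:bijection} applies and gives $\lambda=\lambda(W)$. The diagonal entries $\lambda_{i,i}$ multiply the identically zero $\mathcal{C}_{i,i}$, so they carry no content and are fixed at zero as agreed.

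\textbf{Main obstacle.} The hard step is showing that every dual optimal certificate, not merely some certificate, is pinned to the common values. My approach would be to exhibit enough hard instances and use complementary slackness with each of them. For example, I would take instances that are tight for $\mathcal{D}$ and for $\mathcal{M}_i$, and that have $\mathcal{C}_{i,j}>0$ for $i>j$. The latter forces the corresponding $\lambda_{i,j}$ to vanish. The rank-one structure of $Z$ would then follow from $\inner{Z}{G}=0$ for a primal optimal $G$ of rank $N+1$ whose kernel is spanned by $w$. Given that the paper asserts this as a by-product of Section~\ref{sec:necessity}, I would cite that section's argument directly.
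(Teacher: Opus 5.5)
Your proposal matches the paper's proof: run the Section~\ref{sec:necessity} argument on an arbitrary dual optimal certificate, read off the recursion~\eqref{eq:dual-generated-FSFOM} from the Gram coefficients, and then use Proposition~\ref{prop:bijection} to conclude $\lambda=\lambda(W)$. One correction to your ``main obstacle'' remarks: complementary slackness with the single hard instance yields only $\lambda_{i,j}=0$ for $i>j$, $\lambda_{i,\star}=0$, and $Z=\beta ww^\top$. The values $\beta=\sigma$, $\mu_i$, $\lambda_{\star,i}$ and $\sigma$ are then fixed by matching coefficients in~\eqref{eq:certificate}: the $\|x_0-x_\star\|^2$, $\|g_i\|^2$ and $\inner{g_i}{x_0-x_\star}$ coefficients, plus the constant term. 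Instances tight for $\mathcal{D}$ or $\mathcal{M}_i$ would not help, because a tight constraint gives no information about its multiplier.
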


Lastly, we note two connections and insights from relating these results to prior work.
Expanding the recursive form~\eqref{eq:lambda(W)} shows the multipliers $\lambda(W)$ are equal to
\begin{align}
    \lambda_{j,n}&=\frac{D}{M(N+1)^{3/2}}
\sum_{\ell\ge1}(-1)^{\ell-1}\sum_{n=k_0>k_1>\dots>k_\ell=j}
\frac{\prod_{r=1}^{\ell}\left(W_{k_{r-1},k_r-1}-W_{k_{r-1},k_r}\right)}{\prod_{r=0}^{\ell}W_{k_r,k_r-1}}, \quad 1 \leq j < n \leq N \nonumber \\
\lambda_{0,n}&=\frac{D}{M(N+1)^{3/2}}\sum_{\ell\ge0}(-1)^{\ell}\sum_{n=k_0>k_1>\dots>k_\ell\geq 1}\frac{\prod_{r=1}^{\ell}\left(W_{k_{r-1},k_r-1}-W_{k_{r-1},k_r}\right)}{\prod_{r=0}^{\ell}W_{k_r,k_r-1}}-\frac{1}{N+1}. \label{eq:lambda_poly_form}
\end{align}

The nonnegativity of these values gives the necessary polynomial inequalities in $W$, mirroring the $H$-certificates of~\cite{yoon2025}. The $H$-invariances of~\cite{yoon2025} provided further necessary equalities for optimal methods that fix the performance of each optimal method on the hard instance of~\cite{ParkRyu2022_exact}. Similarly, we find $W$-invariances for subgradient methods that fix the last row of any optimal method. Namely, from the equalities for $\lambda(W)\in\mathcal{P}^N$, one can derive\footnote{Notice that~\eqref{eq:opt-lambda-set} ensures that $\Lambda_i=\sum_{n=i+1}^N\lambda_{i,n}$ for $i<N$ and $\Lambda_N=1$. Then summing~\eqref{eq:dual-generated-FSFOM} over $n=j+1,\ldots,N$, $$\sum_{n=j+1}^N\Lambda_nW_{n,j} = (N-j)D/[M(N+1)^{3/2}] + \sum_{n=j+1}^N\sum_{i=j+1}^{n-1}\lambda_{i,n}W_{i,j}.$$
Subtracting the common $\sum_{n=j+1}^{N-1}\Lambda_nW_{n,j}$ terms from both sides and noting $\Lambda_N=1$ gives the claim.} the equalities that 
\begin{equation} \label{eq:W-invariance}
    W_{N,j} = \frac{D}{M\sqrt{N+1}}\frac{N-j}{N+1}, \qquad 0 \leq j \leq N-1.
\end{equation}

Following~\cite{yoon2025}, Yoon and Grimmer~\cite{yoon2026} developed a composition theory, which provided a combinatorial handle on extremal optimal fixed-point methods. Future work may be able to derive similar subgradient method theory by studying $\mathcal{P}^N$. (When mapped to the polyhedron $\mathcal{P}^N$, one can verify the averaged subgradient method and the momentum-type method of~\cite{constructive_approach} correspond to extreme points while the optimal subgradient method without averaging of~\cite{zamani2025exact} corresponds to the barycenter of the vertices.)

Zamani and Glineur~\cite{zamani2025exact} established that no method iterating $x_n=x_{n-1} - h_{n-1}g_{n-1}$ can guarantee optimal performance at every $n$. That is, optimal fixed $h_{0},\dots,h_{N-1}$ must depend on the horizon $N$. A similar calculation from Theorem~\ref{thm:characterization} lets us widen their conclusion to fixed-step methods: from the $W$-invariance~\eqref{eq:W-invariance}, the only possible anytime fixed-step method must iterate
\begin{equation}\label{eq:possible-anytime-method-form}
    x_n = x_0 - \sum_{i=0}^{n-1} \frac{D}{M\sqrt{n+1}}\frac{n-i}{n+1} g_i
\end{equation}
for $n=1,2,\dots$. Taking the associated $W$ matrix with $W_{n,i} = \frac{D}{M\sqrt{n+1}}\frac{n-i}{n+1}$ up to $N=5$, $\lambda(W)\not\in \mathcal{P}^5$. This can be verified by noting that for such methods, the induced proof $\lambda(W)$ has
$$\lambda_{0,2} = \frac{3\sqrt{3} - 2\sqrt{2} - \sqrt{N+1}}{(N+1)^{3/2}}, $$
which is negative at $N=5$. This boundary is sharp as the matrix up to $N=4$ has $\lambda(W)\in\mathcal{P}^4$.

\begin{corollary}
    For $N\geq 5$, no method $W$ has iterates $x_0,\dots,x_N$ each optimal at the $n$th iteration.
\end{corollary}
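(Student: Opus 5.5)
The statement to prove is the final corollary: for $N\geq 5$, no single $W$ produces iterates $x_0,\dots,x_N$ that are each optimal at their own iteration. The plan is to combine Theorem~\ref{thm:characterization} with the $W$-invariance~\eqref{eq:W-invariance}, and then exhibit one violated inequality of $\mathcal{P}^5$.

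First, suppose some infinite lower triangular array $W$ is anytime optimal, so that for each horizon $n\leq N$ the truncation to rows $1,\dots,n$ lies in $\mathcal{W}_\star^{M,D,n}$. This is valid because the first $n$ iterates of an FSFOM depend only on those rows.

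Second, apply~\eqref{eq:W-invariance} at horizon $n$. It forces $W_{n,i}=\frac{D}{M\sqrt{n+1}}\frac{n-i}{n+1}$ for every $n$. So the method is uniquely pinned down as~\eqref{eq:possible-anytime-method-form}, and its subdiagonal $W_{n,n-1}=\frac{D}{M(n+1)^{3/2}}$ is positive.

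Third, take the horizon $N=5$ (the case $N>5$ contains the $N=5$ truncation). By Theorem~\ref{thm:characterization} and the bijection of Proposition~\ref{prop:bijection}, optimality of the truncation means $\lambda(W)\in\mathcal{P}^5$. In particular $\lambda(W)\geq 0$.

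Fourth, compute $\lambda_{0,2}$ from~\eqref{eq:lambda(W)} with horizon $N=5$ in the normalizing constant $\frac{D}{M(N+1)^{3/2}}$:
\begin{itemize}
\item Use $W_{2,1}=\frac{D}{M\,3^{3/2}}$.
\item Obtain $\Lambda_2=\frac{3^{3/2}}{(N+1)^{3/2}}$.
\item Solve the $j=0$ equation for $\lambda_{1,2}$ using $W_{1,0}=\frac{D}{M2^{3/2}}$ and $W_{2,0}=\frac{2D}{M3^{3/2}}$.
\item Set $\lambda_{0,2}=\Lambda_2-\frac1{N+1}-\lambda_{1,2}$.
\end{itemize}
This should reproduce
$$\lambda_{0,2}=\frac{3\sqrt3-2\sqrt2-\sqrt{N+1}}{(N+1)^{3/2}}.$$
At $N=5$ the numerator is $5.196-2.828-2.449<0$, contradicting $\lambda\geq 0$. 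So no anytime optimal method exists.

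The main obstacle is the logical step from optimality at each $n$ to optimality of each truncated matrix. Here I must check that the final-iterate PEP at horizon $n$ depends only on rows $1,\dots,n$, and that the constants $\frac{D}{M(n+1)^{3/2}}$ are used consistently across horizons. The arithmetic for $\lambda_{0,2}$ itself is routine.
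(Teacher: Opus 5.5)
Your proposal is correct and follows the paper's own argument. The $W$-invariance~\eqref{eq:W-invariance}, applied at each horizon, pins down the unique candidate~\eqref{eq:possible-anytime-method-form}. The entry $\lambda(W)_{0,2}=\frac{3\sqrt3-2\sqrt2-\sqrt{N+1}}{(N+1)^{3/2}}$ is then negative at $N=5$, so $\lambda(W)\notin\mathcal{P}^5$, contradicting Theorem~\ref{thm:characterization}. Your truncation remarks (the first $n$ iterates depend only on rows $1,\dots,n$, and $N>5$ reduces to $N=5$) simply make explicit a step the paper leaves implicit.
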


\section{Proof of Sufficiency for Theorem~\ref{thm:characterization}}\label{sec:sufficiency}

Below we prove that each method defined by $W(\lambda)$ for any $\lambda\in\mathcal{P}^N$ is optimal. Note that our proofs take the same form as those generated by the constructive approach of~\cite{constructive_approach}. Up to changes in notation, each considered $\lambda$ is part of a dual PEP solution for their associated Greedy First-Order Method. From this, each $W=W(\lambda)$ of our considered form could be constructively generated by~\cite[Corollary 1]{constructive_approach}. For simplicity, we have given a self-contained proof of their optimality.

Consider any $\lambda \in \mathcal{P}^N$ and let $W=W(\lambda)$, which is well-defined as $\Lambda_n = \frac{1}{N+1} + \sum_{i=0}^{n-1}\lambda_{i,n} > 0$ by the nonnegativity of $\lambda$.  Fix the values for $\sigma,\lambda_{\star,i},\mu_i$ as in~\eqref{eq:common-values} and define $Z=\sigma ww^\top$ with $w=\left[1,-\frac{D}{M\sqrt{N+1}},\dots,-\frac{D}{M\sqrt{N+1}}\right]^\top$.
We claim the core PEP identity holds
\begin{equation} \label{eq:core-proof-identity}
    \sigma\mathcal D+\sum_{i,j \in \Ical_N^\star}\lambda_{i,j}\mathcal C_{i,j}+\sum_{i=0}^N\mu_i\mathcal{M}_i+\inner{Z}{G} = \frac{MD}{\sqrt{N+1}} - (f_N-f_\star).
\end{equation}
Once proven, the fact that the method with stepsizes $W$ attains the optimal $\frac{MD}{\sqrt{N+1}}$ worst-case rate is immediate by the nonnegativity of the left-hand-side.

Observe that the identity~\eqref{eq:core-proof-identity} is affine in $F$ and $G$ (once one substitutes the equalities $g_\star=0$ and $x_n = x_{0} - \sum_{i=0}^{n-1} W_{n,i} g_i$). Hence, we can verify the identity by verifying that each associated coefficient matches between the two sides. The constant terms agree as their correspondence amounts to
\begin{equation}
     \sigma D^2 + \sum_{i=0}^N \mu_i M^2 = \frac{MD}{\sqrt{N+1}}
\end{equation}
which is immediate from the values of $\sigma$ and $\mu_i$ in~\eqref{eq:common-values}.

Next, for each $j$, we verify that the coefficients of each $f_j$ on the left-hand-side reduce to the coefficients on the right-hand-side. Namely, only the constraints $\mathcal{C}_{i,j}$ depend on $f$ terms, so we have
\begin{align*}
    \sum_{i, j \in \Ical_N^\star}\lambda_{i,j}(f_i-f_j) &= \sum_{j=0}^N \left(\sum_{i>j}\lambda_{j,i}-\sum_{i<j}\lambda_{i,j}-\lambda_{\star,j}\right) f_j + \sum_{j=0}^N \lambda_{\star,j} f_\star = -(f_N-f_\star)
\end{align*}
where the first equality rearranges summations, noting $\lambda_{i,\star}$ and $\lambda_{i,j}$ with $i > j$ are zero, and the second applies the defining equalities of $\lambda\in\mathcal{P}^N$ and our choice of $\lambda_{\star,i}=\frac{1}{N+1}$.

Now, we can complete our proof by verifying the coefficients agree on each term in $G$. In particular, we show each term vanishes on the left-hand-side as the right-hand-side has no dependence on $G$. Let $Z_{\star,\star}$ and $Z_{\star,n}$ denote the coefficients on $\|x_0-x_\star\|^2$ and  $\langle x_0-x_\star, g_n\rangle$ in $\inner{Z}{G}$. First, note that the coefficient on $\|x_0-x_\star\|^2$ is zero as
\begin{align*}
    \sigma(-\|x_0-x_\star\|^2) + Z_{\star,\star} \|x_0-x_\star\|^2 = \left(-\frac{M}{2D\sqrt{N+1}} + \frac{M}{2D\sqrt{N+1}}\right) \|x_0-x_\star\|^2 = 0.
\end{align*}
Considering the coefficients on $\|g_n\|^2$, we simplify the left-hand-side as follows
\begin{align*}
    \mu_n (-\|g_n\|^2) + Z_{n,n}\|g_n\|^2 = \left(-\frac{D}{2M(N+1)^{3/2}} + \frac{D}{2M(N+1)^{3/2}}\right)\|g_n\|^2 = 0.
\end{align*}
Lastly, we consider the coefficients on off-diagonal components of $G$. For each $n$, consider the terms associated with $\langle g_n, x_0-x_\star\rangle$ and $\inner{g_n}{g_j}$ with $j<n$. On these terms, the left-hand-side reduces as
\begin{align*}
&\left\langle g_n,
    \left(\lambda_{\star,n}+2Z_{\star,n}\right)(x_0-x_\star)
    +\sum_{j=0}^{n-1}\left(
        \sum_{i=j+1}^{n-1}\lambda_{i,n}W_{i,j}
        -\Lambda_n W_{n,j}
        +2Z_{j,n}
    \right)g_j
\right\rangle \\
&=
\left\langle g_n,
    \sum_{j=0}^{n-1}\left(
        \sum_{i=j+1}^{n-1}\lambda_{i,n}W_{i,j}
        -\Lambda_n W_{n,j}
        +\frac{D}{M(N+1)^{3/2}}
    \right)g_j
\right\rangle =0
\end{align*}
where the first equality applies $\lambda_{\star,n}+2Z_{\star,n} = 0$, and the second equality notes these terms vanish as $W=W(\lambda)$ satisfying~\eqref{eq:dual-generated-FSFOM}.
Hence, each coefficient in the identity~\eqref{eq:core-proof-identity} agrees, proving the identity and $W\in\Wopt$.

\section{Proof of Necessity for Theorem~\ref{thm:characterization}}\label{sec:necessity}

Throughout this section, let $M, D > 0$ and $N \in \mathbb{N}$, and fix some $W  \in \Wopt$. 
First, we recall a well-known hard problem instance for subgradient methods~\cite[Theorem A.1]{drori2016optimal}:
For any $k\in\mathbb{N}$, let $e_1, \dots, e_k$ denote an orthonormal basis for $\mathbb{R}^k$. We define a corresponding adversarial problem instance as \begin{equation}\label{eq:hard-instance}
    f(x) = \max\left\{\max_{1 \leq i \leq k} \inner{x}{Me_i}, -\frac{MD}{\sqrt{k}}\right\}, \quad x_0 = 0, \quad
g(x) = \begin{cases}
Me_{i_\star} & \text{if } f(x) > -\frac{M D}{\sqrt{k}}\\
0 & \text{otherwise},
\end{cases}
\end{equation}
where $i_\star = \min\{i : f(x) = \inner{x}{M e_i}\}$. Notably, $f$ is convex and $M$-Lipschitz, minimizer $x_\star = -\frac{D}{\sqrt{k}}[1, \dots, 1]^\top$ satisfies  $\|x_0 - x_\star\| = D$, and $f(x_\star) = -\frac{MD}{\sqrt{k}}$. For $k=N+1$, no $N$-step subgradient span method can produce a point with $f(x)$ less than zero. Hence, this instance proves the minimax optimal rate's lower bound.
By considering the hard instance with $k=N$, we further find that any optimal $W$ must have strictly positive lower triangular entries.
\begin{lemma}\label{lem:opt-W-positive}
    $W \in \mathbb{L}^{N}_{++}$.
\end{lemma}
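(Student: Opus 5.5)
We argue by contradiction, using the hard instance~\eqref{eq:hard-instance} with $k=N$ and a rotated choice of basis ordering. Suppose some entry $W_{n,j}\leq 0$ with $0\leq j<n\leq N$. We will build an instance in $\Pclass$ on which $f(x_N)-f_\star>\frac{MD}{\sqrt{N+1}}$, contradicting $W\in\Wopt$.

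\textbf{Base behaviour.} On the instance with $k=N$, started from $x_0=0$, the subgradient span structure makes the iterates explore coordinates one at a time. Since $f(x_0)=0>-\frac{MD}{\sqrt N}$, the oracle returns $g_0=Me_1$. Inductively, $x_i=-M\sum_{\ell<i}W_{i,\ell}e_{\ell+1}$. If the relevant $W$ entries are positive, then $f(x_i)=0$, attained at the untouched coordinate $e_{i+1}$ (the minimal index with value zero), and so $g_i=Me_{i+1}$. Using the tie-break $i_\star=\min$ together with the basis ordering, we can arrange that the first problematic coordinate is the one realized by the maximum.

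\textbf{Exploiting a nonpositive entry.} Take the first row $n$, in lexicographic order, containing some $W_{n,j}\leq 0$. Then $x_n$ has coordinate $-MW_{n,j}\geq 0$ on $e_{j+1}$, so $f(x_n)\geq 0$. Beyond the max term, we want the final value $f(x_N)$ to stay at least $0$, or more strongly to be positive.

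To do this, we modify the instance by adding a small perturbation, or by reordering which basis vector is returned. The cleanest version is to include only $k=N$ coordinates. Subgradients at iterates $x_0,\dots,x_{N-1}$ then span at most $N$ directions, and we need $f(x_N)\ge 0$. This gives a gap of at least $\frac{MD}{\sqrt N}>\frac{MD}{\sqrt{N+1}}$, which is the contradiction.

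\textbf{Main obstacle.} The hard part is ensuring $f(x_N)\ge 0$ when only $N$ coordinates exist and all $N$ subgradients may have been queried. With positive $W$ all coordinates of $x_N$ are negative, so $f(x_N)$ can be negative. The idea is therefore to exploit the nonpositive entry so that some query repeats a previously seen direction rather than a new one.

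Concretely, once $x_n$ has a nonnegative coordinate $j+1$, we arrange (by the ordering $i_\star$, or by reindexing the basis so the offending coordinate is returned first) that $g_n=Me_{j+1}$ repeats. The $N$ queries then span at most $N-1$ directions, leaving some coordinate $e_m$ untouched, so $\langle x_N,Me_m\rangle=0$ and $f(x_N)\ge 0$.

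Making the tie-break deliver the repeated direction is what requires care. If $W_{n,j}<0$, coordinate $j+1$ is strictly positive and is the strict maximizer, so the repeat is automatic. If $W_{n,j}=0$, we break the tie with the min rule by permuting the basis labels so that $e_{j+1}$ precedes the fresh coordinates; if needed, we instead perturb $f$ slightly and argue by continuity that the worst case is still violated.
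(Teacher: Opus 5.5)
Your argument is correct and is essentially the paper's proof. On the $k=N$ hard instance, the earliest row $n$ with a nonpositive entry either is $n=N$, so $x_N$ has a coordinate $-MW_{N,j}\geq 0$, or forces $g_n$ to repeat a previously seen direction and leave some $e_m$ untouched; either way $f(x_N)\geq 0$, giving a gap of at least $MD/\sqrt{N}>MD/\sqrt{N+1}$.

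The basis permutation and perturbation fallbacks are unnecessary, because with the natural labeling the min tie-break already selects some $i_\star\leq n$. If some $W_{n,j}<0$, the positive maximum is attained only among the first $n$ coordinates, though not necessarily at $j+1$ as you assert. If the nonpositive entries are all zero, then $j+1\leq n$ precedes the fresh indices $n+1,\dots,N$.
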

\begin{proof}
Suppose to the contrary that $W_{n,j} \leq 0$ for some $j \leq n-1$. In particular, let $\hat{n}$ denote the earliest iteration where such a nonpositive value $W_{\hat{n},j}$ occurs.
We then derive a contradiction by showing that when applied to the hard problem instance~\eqref{eq:hard-instance} with $k = N$, this $\hat{n}$th iteration will cause the method $W$ to be strictly suboptimal. 

First, we consider the case when $\hat{n} < N$. Notice that for any iteration $n < \hat{n}$, since $W_{n,j}>0$, the finite maximum defining $f$ at $x_{n}$ is inactive on its first $n$ components. Hence, the adversarial oracle will set $g_n = Me_{n+1}$.
However,  at iteration $\hat{n}$, the oracle will set $g_{\hat{n}} = M e_{j+1}$ for some previous $j < \hat{n}$ with nonpositive $W_{\hat{n},j}$. As a result, $g_{\hat{n}}$ lies in the span of previously seen subgradients.
Therefore, at iteration $N$, the span of $\{g_i\}_{i=0}^{N-1}$ contains at most $N-1<k$ of the orthonormal directions. Consequently, $\langle e_i,x_N\rangle=0$ for some $i$, causing $f(x_N)\geq 0$. In the case where $\hat{n} = N$, some $j$ has $\inner{x_N}{e_{j+1}}=-MW_{N,j} \geq 0$. Either way, $f(x_N) - f(x_\star) \geq 0 - (-\frac{MD}{\sqrt{N}}) > \frac{MD}{\sqrt{N+1}}$, which contradicts optimality.
\end{proof}

Since $W$ has positive lower triangular entries, Lemma~\ref{lem:strong-duality} applies. Thus we can fix a selection of nonnegative $\sigma, \lambda_{i,j}, \mu_i$ and positive semidefinite $Z$ such that the certificate~\eqref{eq:certificate} holds. Since  $W \in \Wopt$, we have that $r = p_\star(W) = \frac{MD}{\sqrt{N+1}}$.

Continuing our examination of the known hard instance with $k=N+1$, we find that structure is forced among these dual values as well via complementary slackness. 
As a shorthand, let $(F^W,G^W)$ denote the function values and Gram inner products observed when $W$ is applied to the hard instance~\eqref{eq:hard-instance}. We denote the realized constraints by $\mathcal{D}^W := \mathcal{D}(F^W, G^W)$,  $\mathcal{C}_{i,j}^W := \mathcal{C}_{i,j}(F^W, G^W)$,  and $\mathcal{M}_i^W := \mathcal{M}_i(F^W, G^W)$.

\begin{lemma}\label{lem:comp_slackness}
   The dual optimal multipliers $\lambda_{i,j}$ for the constraints $\mathcal{C}_{i,j}\geq 0$ satisfy $\lambda_{i,j}= 0$ for all $i > j$ and $\lambda_{i,\star} = 0$ for all $i = 0, \dots, N$.
\end{lemma}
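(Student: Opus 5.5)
The plan is complementary slackness on the hard instance~\eqref{eq:hard-instance} with $k=N+1$. Since $W\in\Wopt$, running $W$ on this instance produces a feasible point $(F^W,G^W)$ of~\eqref{eq:PEP_gram}. Its objective value equals $\frac{MD}{\sqrt{N+1}}$: no span method gets below $0$, so $f_N\ge 0$, and optimality of $W$ forces $f_N-f_\star\le \frac{MD}{\sqrt{N+1}}$. Evaluate the certificate identity~\eqref{eq:certificate}, with $r=\frac{MD}{\sqrt{N+1}}$, at $(F^W,G^W)$. The right-hand side vanishes. Every term on the left is nonnegative, since $\sigma,\lambda,\mu\ge0$, the constraints are satisfied, and $\inner{Z}{G^W}\ge0$. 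Hence every term vanishes, and in particular $\lambda_{i,j}\mathcal{C}^W_{i,j}=0$ for all $i,j$. It therefore suffices to show $\mathcal{C}^W_{i,j}>0$ for $i>j$ and $\mathcal{C}^W_{i,\star}>0$.

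First, compute the trajectory. By Lemma~\ref{lem:opt-W-positive}, $W\in\mathbb{L}^N_{++}$, and the argument of that lemma gives, inductively, $x_n=-M\sum_{i<n}W_{n,i}e_{i+1}$. This point has strictly negative coordinates $1,\dots,n$ and zero coordinates $n+1,\dots,N+1$. So $f(x_n)=0>-\frac{MD}{\sqrt{N+1}}$, the active index is $i_\star=n+1$, and $g_n=Me_{n+1}$. Thus $f_n=0$ for all $n=0,\dots,N$, the $g_n$ are orthogonal, and $f_\star=-\frac{MD}{\sqrt{N+1}}$ with $x_\star=-\frac{D}{\sqrt{N+1}}\mathbf{1}$.

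Second, compute the gaps. For $i>j$,
\[
\mathcal{C}^W_{i,j}=f_i-f_j-\inner{g_j}{x_i-x_j}=-\inner{Me_{j+1}}{x_i}=M^2W_{i,j}>0,
\]
because $\inner{e_{j+1}}{x_j}=0$ and $W_{i,j}>0$ by Lemma~\ref{lem:opt-W-positive}. For $i\in\{0,\dots,N\}$,
\[
\mathcal{C}^W_{i,\star}=f_i-f_\star-\inner{g_\star}{x_i-x_\star}=\frac{MD}{\sqrt{N+1}}>0,
\]
since $g_\star=0$. Complementary slackness then gives $\lambda_{i,j}=0$ for $i>j$ and $\lambda_{i,\star}=0$.

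The main subtlety is making sure the hard instance is a legitimate feasible point of the Gram PEP for this specific $W$. The oracle's tie-breaking must produce exactly $g_n=Me_{n+1}$, which needs strict negativity of the earlier coordinates, i.e. Lemma~\ref{lem:opt-W-positive}. The instance must also attain the optimal value exactly, so that the duality gap at this point is zero. Both follow from the computations above.
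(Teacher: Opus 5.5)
Your proposal is correct and follows essentially the same route as the paper's proof. You evaluate the optimal certificate at the trajectory of $W$ on the hard instance with $k=N+1$, note that the right-hand side vanishes so every nonnegative term on the left must vanish, and compute the strictly positive slacks $\mathcal{C}^W_{i,j}=M^2W_{i,j}$ for $i>j$ and $\mathcal{C}^W_{i,\star}=\frac{MD}{\sqrt{N+1}}$; you simply spell out the oracle tie-breaking and the tightness of the instance, which the paper leaves implicit.
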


\begin{proof}
 Since $W \in \mathbb{L}^{N}_{++}$ by Lemma~\ref{lem:opt-W-positive}, the iterate $x_n$ produced by $W$ will always have $f(x_n)$ inactive on its first $n$ components. Thus each iteration $n$ will have $f_n^W=0$ and $g_n^W = Me_{n+1}$. The following values are then forced, $x_0^W-x_\star^W  = \frac{D}{\sqrt{N+1}}[1, \dots, 1]^\top$ and $x_n^W = -\sum_{i=0}^{n-1}W_{n,i}g_i^W$. Plugging these into $\mathcal{C}_{i,j}$ shows that
 \begin{align*}
        \mathcal{C}^W_{i,j} &= -\inner{g_j^W}{x_i^W} = \begin{cases}
            M^2W_{i,j} & i > j\\
            0 & \text{else}
        \end{cases}, \quad \mathcal{C}^W_{i,\star} = -f_\star^W = \frac{MD}{\sqrt{N+1}}, \quad \mathcal{C}_{\star,i}^W = f_\star^W - \inner{g_i^W}{x_\star^W} = 0.
    \end{align*}
    Since $W \in \mathbb{L}^{N}_{++}$, the slacks $\mathcal{C}^W_{i,j}$ for $i>j$ and $\mathcal{C}^W_{i,\star}$ are strictly positive. Since the right-hand-side of~\eqref{eq:certificate} is exactly zero on this tight instance, each nonnegative term on the left-hand-side must be zero. Hence, by complementary slackness, $\lambda_{i,j}= 0$ for $i > j$ and $\lambda_{i,\star} = 0$.
\end{proof}

\begin{lemma}\label{lem:rank-one-Z}
 The dual-optimal multiplier $Z\succeq0$ for the constraint $G\succeq0$ satisfies $Z=\sigma\,ww^\top$, where $\sigma\geq0$ is the dual multiplier of $\mathcal D$ and $w = [1, -\frac{D}{M\sqrt{N+1}}, \dots, -\frac{D}{M\sqrt{N+1}}]^\top.$
\end{lemma}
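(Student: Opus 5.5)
The plan is to use complementary slackness against the hard instance~\eqref{eq:hard-instance} with $k=N+1$, exactly as in Lemma~\ref{lem:comp_slackness}, but now applied to the semidefinite term. Evaluate the certificate~\eqref{eq:certificate} at $(F^W,G^W)$. The right-hand side is $\frac{MD}{\sqrt{N+1}}-(f^W_N-f^W_\star)=0$. Each term on the left is nonnegative:
\begin{itemize}
\item $\sigma\mathcal D^W\geq 0$, $\lambda_{i,j}\mathcal C^W_{i,j}\geq 0$ and $\mu_i\mathcal M^W_i\geq 0$, because the instance is feasible;
\item $\inner{Z}{G^W}\geq 0$, because both $Z$ and $G^W$ are positive semidefinite.
\end{itemize}
Hence every term vanishes, and in particular $\inner{Z}{G^W}=0$. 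For two positive semidefinite matrices, a zero trace inner product forces $ZG^W=0$. So the range of $Z$ lies in the null space of $G^W$.

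Next, compute that null space explicitly. We have $G^W=P^\top P$ with
$$P=\left[\tfrac{D}{\sqrt{N+1}}\textstyle\sum_{i=1}^{N+1}e_i \ \middle|\ Me_1\ \middle|\ \cdots\ \middle|\ Me_{N+1}\right],$$
using $x_0^W-x_\star^W=\frac{D}{\sqrt{N+1}}[1,\dots,1]^\top$ and $g^W_n=Me_{n+1}$ from the proof of Lemma~\ref{lem:comp_slackness}. Thus $P$ is an $(N+1)\times(N+2)$ matrix whose last $N+1$ columns are independent, so $\mathrm{null}(G^W)=\mathrm{null}(P)$ is one-dimensional. Solving $Pv=0$ with $v=(a,b_0,\dots,b_N)$ gives
$$\tfrac{aD}{\sqrt{N+1}}+Mb_i=0 \quad\text{for every } i,$$
so $b_i=-\frac{aD}{M\sqrt{N+1}}$ and $v$ is a multiple of $w$. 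Since $Z\succeq 0$ has range in $\spann{w}$, we get $Z=c\,ww^\top$ for some $c\geq 0$.

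Finally, pin down $c=\sigma$ by matching the coefficient of $\|x_0-x_\star\|^2$ in the identity~\eqref{eq:certificate}, which must hold identically in $(F,G)$.
\begin{itemize}
\item After substituting $x_n=x_0-\sum_i W_{n,i}g_i$ and $g_\star=0$, each $\mathcal C_{i,j}$ is linear in the $g$'s: it consists of function values plus inner products of some $g_j$ with iterate differences. So it has no $\|x_0-x_\star\|^2$ term.
\item Each $\mathcal M_i$ involves only $\|g_i\|^2$.
\item The right-hand side has no $G$-dependence.
\end{itemize}
The only contributions are therefore $-\sigma$ from $\sigma\mathcal D$ and $Z_{\star,\star}=c\,w_1^2=c$ from $\inner{Z}{G}$. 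Hence $c=\sigma$ and $Z=\sigma ww^\top$.

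I expect the main obstacle to be the rigorous null-space step. One must justify that $\inner{Z}{G^W}=0$ with both matrices PSD implies $\mathrm{range}(Z)\subseteq\mathrm{null}(G^W)$; the standard route is to write $\mathrm{tr}(ZG^W)=\|Z^{1/2}(G^W)^{1/2}\|_F^2$. One must also confirm the rank of $P$ is exactly $N+1$. Both points are routine once written out.
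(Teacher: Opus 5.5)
Your proposal is correct and follows essentially the same route as the paper: complementary slackness on the hard instance with $k=N+1$ gives $\inner{Z}{G^W}=0$, hence $ZG^W=0$ and $\mathrm{range}(Z)\subseteq\ker G^W=\spann{w}$, and matching the coefficient of $\|x_0-x_\star\|^2$ then forces the scale to equal $\sigma$. Your explicit computation of the null space of $P$ just fills in a step the paper states without derivation.
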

\begin{proof}
We continue considering the values $g^W_n = M e_{n+1}$ and $x_0^W-x_\star^W  = \frac{D}{\sqrt{N+1}}[1, \dots, 1]^\top$ derived in the previous lemma. Observe that these subgradients are orthogonal and $x_0^W-x_\star^W$ lies in their span. So $\mathrm{rank}(G^W)=N+1$ and $\ker G^W=\spann{w}$. By the same complementary slackness reasoning, we must have $\inner{Z}{G^W}=0$.
Since $Z,G^W\succeq0$, it holds that $Z G^W=0$, making $\mathrm{range}(G^W)\subseteq\ker Z$.

Noting $\mathrm{range}(G^W)$ has codimension one, $\mathrm{rank}(Z) \leq 1$, and in particular, $\mathrm{range}(Z)\subseteq\ker G^W=\spann w$. Therefore, $Z = \beta w w^\top$ for some $\beta \geq 0$. 
Considering the identity~\eqref{eq:certificate},
the coefficient on $\|x_0-x_\star\|^2$ is independent of the constraints $\mathcal C_{i,j}$ and $\mathcal{M}_i$. Hence, $\sigma\mathcal D=\sigma(D^2-\|x_0-x_\star\|^2)$ is the sole source of $\|x_0-x_\star\|^2$ besides $\inner{Z}{G}$. Thus $-\sigma+Z_{\star,\star}=0$ and so  $Z=\sigma w w^\top$.
\end{proof}

By considering the core PEP identity~\eqref{eq:certificate}, restated below for ease, we will find the multiplier values for $W$ are uniquely determined and, in particular, the corresponding $\lambda$ block must lie in $\mathcal{P}^N$ and equal $\lambda(W)$, completing our proof. Recall that every optimal proof certificate satisfies
$$\sigma\mathcal D+\sum_{i,j \in \Ical_N^\star}\lambda_{i,j}\mathcal C_{i,j}+\sum_{i=0}^N\mu_i\mathcal{M}_i+\inner{Z}{G} = \frac{MD}{\sqrt{N+1}} - (f_N-f_\star).$$  By Lemma~\ref{lem:comp_slackness}, $\lambda_{i,j}=0$ for $i>j$ and $\lambda_{i,\star}=0$, so the only remaining convexity terms are $\lambda_{i,j}$ for $i<j$ and $\lambda_{\star,i}$. 

Substituting $x_n=x_0-\sum_{i=0}^{n-1}W_{n,i}g_i$, and expanding $\inner{g_i}{x_i-x_\star}=\inner{g_i}{x_0-x_\star}-\sum_{k=0}^{i-1}W_{i,k}\inner{g_i}{g_k}$ in each $\mathcal{C}_{\star,i}$, the part of $\sigma\mathcal D+\sum_{i,j}\lambda_{i,j}\mathcal C_{i,j}+\sum_i\mu_i\mathcal M_i$ depending on $G$ is
$$-\sigma\|x_0-x_\star\|^2-\sum_{i=0}^N\mu_i\|g_i\|^2+\sum_{i=0}^N\lambda_{\star,i}\inner{g_i}{x_0-x_\star}+\sum_{j<n}\left(\sum_{i=j+1}^{n-1}\lambda_{i,n}W_{i,j}-\Lambda_n W_{n,j}\right)\inner{g_n}{g_j},$$
where $\Lambda_n = \lambda_{\star,n}+\sum_{i=0}^{n-1}\lambda_{i,n}$.  Considering the form of $Z$ from Lemma~\ref{lem:rank-one-Z}, writing
$G=P^\top P$ for $P=[x_0-x_\star \ | \ g_0 \ |\  g_1 \ |\  \dots\ | \ g_N]$, we note  
\begin{align*}
\inner{Z}{G}&=\sigma\left\|x_0-x_\star-\frac{D}{M\sqrt{N+1}}\sum_{i=0}^N g_i\right\|^2.
\end{align*} Adding this term, and noting each Gram coordinate's coefficient vanishes in~\eqref{eq:certificate}, we conclude
\begin{align}\label{eq:G-values}
\lambda_{\star,i}=2\sigma\frac{D}{M\sqrt{N+1}}, \qquad \mu_i=\sigma\left(\frac{D}{M\sqrt{N+1}}\right)^2, \qquad \Lambda_n W_{n,j}=\sum_{i=j+1}^{n-1}\lambda_{i,n}W_{i,j}+2\sigma\left(\frac{D}{M\sqrt{N+1}}\right)^2,
\end{align}
by considering the coefficients on the $\inner{g_i}{x_0-x_\star}$, $\|g_i\|^2$, and  $\inner{g_n}{g_j}$ respectively.

Considering the coefficients on the $f_i$ terms, it follows that
\begin{align*}
    \sum_{j=0}^N \left(\sum_{i>j}\lambda_{j,i}-\sum_{i<j}\lambda_{i,j}-\lambda_{\star,j}\right) f_j + \sum_{i=0}^N \lambda_{\star,i} f_\star +(f_N-f_\star) = 0.
\end{align*}
Setting each coefficient to zero requires 
\begin{align}\label{eq:f-values}
\sum_{i>j} \lambda_{j,i}-\sum_{i<j} \lambda_{i,j} = \lambda_{\star,j}, \ \ \forall j = 0, \dots, N-1,
\qquad \sum_{i < N}\lambda_{i,N} = 1 - \lambda_{\star,N}, \qquad 
\sum_{i=0}^N \lambda_{\star,i} = 1
\end{align}
by considering the $f_i$ with $i < N$, $f_N$ and $f_\star$ terms respectively.

Finally, comparing constant terms in the identity, $\frac{MD}{\sqrt{N+1}} = \sigma D^2 + M^2 \sum_{i=0}^N \mu_i. $
From the values of $\lambda_{\star,i}$ and $\mu_i$ above, we conclude that $\sigma = \frac{M}{2D\sqrt{N+1}}$, $\lambda_{\star,i} = \frac{1}{N+1}$ and $\mu_i = \frac{D}{2M(N+1)^{3/2}}$. With these values of $\lambda_{\star,i}$, the constraints on $\lambda$ in~\eqref{eq:f-values}, along with nonnegativity from Lemma~\ref{lem:strong-duality}, collapse to the constraints defining $\mathcal{P}^N$. Therefore $\lambda \in \mathcal{P}^N$. Lastly, considering the value of $\sigma$, from~\eqref{eq:G-values}, $W=W(\lambda)$ is generated from~\eqref{eq:dual-generated-FSFOM}, so $W \in W(\mathcal{P}^N)$.

To prove Corollary~\ref{cor:uniqueness}, observe that we have established the uniqueness of the dual multipliers, fixing $\lambda_{i,j}$ for $0 \leq i < j \leq N$ and setting the rest as in~\eqref{eq:common-values}. Then, since $W = W(\lambda)$ and $W(\cdot)$ is a bijection onto ${\Lambda_n > 0}$ by Proposition~\ref{prop:bijection}, $\lambda = \lambda(W)$.

    \paragraph{Acknowledgments.} This work was supported in part by the Air Force Office of Scientific Research. Benjamin Grimmer was additionally supported as a fellow of the Alfred P. Sloan Foundation.
  
    {\small
    \bibliographystyle{plain}
    \bibliography{bibliography}
    }

\end{document}